\documentclass[11pt]{article}

\usepackage[margin=1in]{geometry}
\usepackage{amsmath,amsfonts,amssymb,amsthm}
\usepackage{graphicx}
\usepackage[authoryear]{natbib}
\usepackage[colorlinks,citecolor=blue,urlcolor=blue]{hyperref}

\newtheorem{theorem}{Theorem}
\newtheorem{corollary}{Corollary} %% [R13-2]
\newtheorem{lemma}{Lemma}
\theoremstyle{definition}
\newtheorem{definition}{Definition}

\theoremstyle{remark}
\newtheorem{remark}{Remark}

\newenvironment{bitem}[1]{\par\medskip\noindent\textbf{#1.}\ \itshape}%
{\par\medskip}

\begin{document}

\title{A New Look at the Classical Estimation Problem}

\author{Paul W. Vos\thanks{Department of Public Health, East Carolina University,
Greenville, North Carolina. Email: \texttt{vosp@ecu.edu}. ORCID: 0000-0001-9996-5627.}}

\date{\today}

\maketitle

\begin{abstract}
Bahadur's \emph{Lectures on the Theory of Estimation} develop the
classical theory of point estimation inside the geometry of
Hilbert space, and they record with unusual
honesty where the theory strains: the locally best unbiased estimate
depends on the parameter, a two-point
parameter space yields an estimate Bahadur calls absurd, the odds
ratio in binomial sampling has no unbiased estimate, and the virtues
of maximum likelihood enter as heuristics and remain heuristics. We
present a subset of the lectures, in Bahadur's notation and
development, and at each strain make one small modification: for
each value in the sample space, an
estimate $\tau$ becomes a function on the parameter space
rather than a point in it, the continuum of null hypotheses that
Fisher described in 1955. Bahadur's own definition of an estimate,
square-integrable at every distribution in the family, already
supplies the domain. The
payoffs are tracked lecture by lecture: estimators that exist at
boundary samples where point estimates do not; an elementary lemma
showing that no pointwise criterion admits a uniformly optimal
estimator, which explains why admissibility, minimaxity, Bayes
averaging, and unbiasedness arose as responses; assessment by
information, $\Lambda(\tau)$, with the score attaining the Fisher
information bound
uniformly by a three-line argument; Cram\'{e}r--Rao attainment and
sufficiency recovered as equality cases of that bound under two maps
from point estimators to generalized estimators; and the maximum
likelihood heuristics converted into exact statements about the
score. Nothing classical is overturned; the classical apparatus is
explained using Fisher's characterization of estimation as a
continuum of significance tests. 
\end{abstract}

\bigskip
\noindent\textbf{Keywords:} Bahadur's lectures; generalized
estimation; Fisher information; score function; unbiased estimation;
sufficiency.

\section{Introduction\label{sec:intro}}

R.~R. Bahadur's \emph{Lectures on the Theory of Estimation}
\citep{Bahadur2002} present the classical theory of point estimation
with unusual economy: the entire development---unbiased estimation,
the information inequality, the Bhattacharya%
\footnote{We follow Bahadur's spelling; the name is now more commonly
transliterated Bhattacharyya.} bounds, sufficiency, the maximum
likelihood heuristics---is carried out inside the geometry of the
Hilbert spaces $L^{2}(P_{\theta})$, using little more than orthogonal
projection. The lectures are also unusually honest. At each stage
Bahadur records, without embarrassment, exactly where the theory
strains: the locally best unbiased estimate ``often depends on
$\theta$, and this is the problem in practice''; a two-point parameter
space produces an unbiased estimate he calls absurd; the odds ratio in
binomial sampling has no unbiased estimate at all, and the plug-in
estimate has infinite expectation; the celebrated properties of
maximum likelihood enter as heuristics and, within the lectures,
remain heuristics. These asides are not blemishes. They mark, with
some precision, the places where the classical formulation asks the
wrong object to do the work.

This paper presents a subset of Bahadur's lectures and, at each of
those places, makes a single small modification. For a family of
distributions $\left\{P_\theta:\theta\in\Theta\right\}$, Bahadur
defines an
estimate as a measurable function $t$ on the sample space $S$
satisfying
$E_{\theta}(t^{2})<\infty$ for \emph{every} $\theta$ in the parameter
space---membership, that is, in every Hilbert space
$L^{2}(P_{\theta})$ at once. The family is thus already present in the
definition; the classical theory admits it at the door and then asks
it to wait outside. The modification is to let it in: an estimator
becomes a function $\tau$ on $S\times\Theta$, so that the estimate
at $s$,
$\tau(s,\cdot)$, is a \emph{function on the parameter
space} rather than a point in it.
The cost is one extra argument. The payoff, which the paper develops, is a theory that
is simpler where the classical theory is strained: estimators that
exist at samples where point estimates do not; local dependence on
$\theta$ transformed from ``the problem in practice'' into the
defining property of the inferential object; and an assessment
criterion---information rather than variance---under which a uniform
optimality theorem is available and the auxiliary apparatus of
unbiasedness, admissibility, and minimaxity is no longer needed to
repair the absence of one.

The look is new; the inferential object is not. Fisher, writing in
1955 on the theory of estimation, described it in exactly these
terms: ``It may
be added that in the theory of estimation we consider a continuum of
hypotheses each eligible as null hypothesis, and it is the aggregate
of frequencies calculated from each possibility in turn as true
\ldots{} which supply the likelihood function''
\citep[p.~73]{Fisher:1955}. A generalized estimator is that
continuum, made an explicit object of the theory and given an
assessment criterion of its own.

The assessment criterion deserves emphasis, because the richer object
turns out to be the \emph{simpler} one to assess. Classical risk is
pointwise: $R_{t}(\theta)$ is computed from the sampling distribution
of $t$ under $P_{\theta}$ alone. An elementary lemma
(Section~\ref{sec:classical}) shows that no pointwise criterion admits
a uniformly optimal estimator, which is why the classical theory must
change the question---averaging risk, comparing maxima, or restricting
to the unbiased class---before optimality can be discussed at all. The
information of a generalized estimator is not pointwise: it measures
the average rate at which $\tau$ responds as the distribution moves
within the family, and it supports a uniform optimality theorem whose
proof is a few lines. 
The engine is a single differentiation identity, stated in
Section~\ref{sec:score}, that Bahadur's own computations already run
on. 
That so simple a fact characterizes efficiency is itself a
consequence of formulating estimation in terms of generalized
estimators.

The framework presented here is developed in \citet{VosWu2025} in the
language of information geometry, and applied to the James--Stein
paradox in \citet{Vos2025-mse}. Neither source is presupposed. The aim
of this paper is to make the framework available to readers at home in
the classical development---which Bahadur's lectures state more
carefully than any comparable source---by modest detours from the
lectures themselves, in Bahadur's own notation, and without
differential-geometric terminology: what is needed is one space of
square-integrable functions carrying a family of inner products, and
that much is already in Lecture 1.

We follow Bahadur's notation and development closely, and the scope
is correspondingly his. The parameter is
scalar throughout; the multiparameter case (his Chapter 6) and the
asymptotic theory (his Chapter 8) are left out. Bayes estimation, to
which Bahadur devotes Lectures 6--10, appears only in a single remark,
because in the present reading Bayes averaging is one of the classical
responses to a structural impossibility rather than a topic to be
developed. Decision-theoretic machinery is not imported. The running
example is Bahadur's own Example 2---Bernoulli sampling under several
stopping rules---which he threads through the lectures on information,
sufficiency, and unbiased estimation, and which supplies both the
boundary samples and the odds ratio at which the classical and
generalized theories visibly part company.

The paper proceeds as follows. Section~\ref{sec:classical} states the
classical estimation problem following Lecture 5, introduces the
generalized estimator, and proves the impossibility lemma that
organizes the classical responses. Section~\ref{sec:unbiased} presents
unbiased estimation and the spaces $W_{\theta}$ of Lectures 11--12.
Section~\ref{sec:score} presents the score, Fisher information, and
the bounds of Lectures 13--15, and introduces the information
$\Lambda(\tau)$ of a generalized estimator.
Section~\ref{sec:attainment} presents the attainment theory and the
one-parameter exponential family of Lectures 16--18, where the
equality case of the information bound recovers sufficiency.
Section~\ref{sec:heuristics} revisits the maximum likelihood
heuristics of Lectures 14 and 25--27, which become exact statements
under the modification. Section~\ref{sec:hilbert} compares Bahadur's
Hilbert spaces with the one used by generalized estimation.
Section~\ref{sec:discussion} summarizes what the modification
provides, what has been deferred to the multiparameter theory of
\citet{VosWu2025}, and where generalized estimation
is itself silent. 

\section{The classical estimation problem\label{sec:classical}}

\subsection{Lecture 5: Bahadur's formulation\label{subsec:formulation}}

Lecture 5 opens with the problem that occupies the remainder of the
lectures. There is a sample space $S$ with sample point $s$, a
$\sigma$-field $\mathcal{A}$ on $S$, and a set
$\mathcal{P}=\{P_{\theta}:\theta\in\Theta\}$ of probability measures
on $\mathcal{A}$ indexed by the parameter space $\Theta$.
A real-valued estimand $g$ is given on
$\Theta$, and the problem is to estimate the actual value $g(\theta)$
from the observed $s$ and to describe the quality of the estimate.

Two standing assumptions, both Bahadur's, are recorded here. The
members of $\mathcal{P}$ are mutually absolutely continuous---they
share support---as required from Lecture 11 onward, where the
likelihood ratios $\Omega_{\delta,\theta}=dP_{\delta}/dP_{\theta}$
become the basic building blocks. The parameter space  is a connected set,
typically an open interval; Bahadur himself concludes, after the
two-point example recalled in Section~\ref{sec:unbiased}, that ``we
should restrict the estimation theory to a continuum of values (i.e.,
should have only connected $\Theta$).''

\begin{definition}[Bahadur, Lecture 5]\label{def:estimate}
An \emph{estimate} (of $g(\theta)$) is a measurable function $t$ on
$S$ such that
\begin{equation}
E_{\theta}(t^{2})=\int_{S}t(s)^{2}\,dP_{\theta}(s)<\infty
\quad\text{for all }\theta\in\Theta.\label{eq:estimate-def}
\end{equation}
\end{definition}

A note on usage: we distinguish an \emph{estimator}, a function on
the sample space, from its realized value at the observed sample
point, the \emph{estimate}. Bahadur does not make this distinction;
under our convention the object of Definition~\ref{def:estimate} is
an estimator. We retain his word when presenting his development.

The definition is stated before any criterion of quality, and it
already contains the observation on which this paper turns.
Lectures 1--4 develop the geometry of
$L^{2}(S,\mathcal{A},P)$ for a single probability measure $P$: a
Hilbert space of (equivalence classes of) square-integrable
functions, with inner product
$(f,h)=E(fh)$, in which orthogonal projections onto subspaces exist,
are unique, and coincide with conditional expectations when the
subspace consists of the square-integrable functions of a statistic.
For each $\theta$ there is such a space
$V_{\theta}=L^{2}(S,\mathcal{A},P_{\theta})$, and
condition~\eqref{eq:estimate-def} says precisely that
\begin{equation}
t\in V_{\theta}\ \text{ for all }\theta\in\Theta;
\quad\text{equivalently,}\quad
t\in\bigcap_{\theta\in\Theta}V_{\theta}.\label{eq:membership}
\end{equation}
An estimate is not an object attached to one distribution; by
definition it lives in every $V_{\theta}$ simultaneously. The family
enters the classical theory at the definition of its basic object and
is then, for the whole of the classical development, excluded from the
assessment of that object---more precisely, the classical assessment
uses the members of the family one at a time, as a bare set of
distributions, and what it never uses is their relatedness.
The modification made in
Section~\ref{subsec:modification} does no more than put
\eqref{eq:membership} to work.

\subsection{The modification\label{subsec:modification}}

A point estimate $t$ answers the question ``which value of
$g(\theta)$?'' with a number. The modified object answers a different
question---``how consistent is each $\theta$ with the observed
$s$?''---with a function.

\begin{definition}\label{def:generalized}
A \emph{generalized estimator} is a function
$\tau:S\times\Theta\rightarrow\mathbb{R}$ such that
\begin{enumerate}
\item[(i)] $\tau(\cdot,\theta)\in V_{\theta}$ and
$E\bigl(\tau(s,\theta)\bigr)=0$ for every
$\theta\in\Theta$;
\item[(ii)] $\tau(s,\cdot)$ is continuously differentiable on $\Theta$
for almost every $s$;
\item[(iii)] $\mathrm{Var}(\tau):=E\bigl(\tau^{2}(s,\theta)\bigr)>0$
for every $\theta$.
\end{enumerate}
For each $s$, the function $\tau_{s}=\tau(s,\cdot):\Theta\rightarrow\mathbb{R}$
is the \emph{generalized estimate} at $s$.
\end{definition}

\begin{remark}\label{rem:notation}
In conditions (i) and (iii) the expectation and variance carry no
subscript, and none is needed: the second argument of $\tau$ names
the distribution, so the expectation at $\theta$ is always computed
under $P_{\theta}$. 
A point estimator $t$ is a real-valued random variable, and its
expectation and variance are real numbers; likewise, a generalized
estimator $\tau$ is a random variable taking values in the space of
smooth functions, so that $E(\tau)$ and $\mathrm{Var}(\tau)$ are
themselves smooth functions on $\Theta$. We therefore suppress the
subscript whenever
generalized estimators are discussed, and retain Bahadur's
$E_{\theta}$, $\mathrm{Var}_{\theta}$ when presenting his
development, where the estimate $t$ carries no argument that could
name the distribution. Note also that variance is written
$\mathrm{Var}$ throughout: $V_{\theta}$ is reserved for the Hilbert
space.
\end{remark}

\begin{remark}\label{rem:geometry}
The definition is geometric. Condition (i) says that at each
$\theta$ the vector $\tau(\cdot,\theta)$ is orthogonal to the
constant functions in $V_{\theta}$:
$E\bigl(\tau(s,\theta)\bigr)=(\tau(\cdot,\theta),1)_{\theta}=0$.
Condition (iii) says its length
$\|\tau\|=\sqrt{\mathrm{Var}(\tau)}$ is positive at every
distribution in the family, so the direction $\tau/\|\tau\|$ always
exists. Condition
(ii) is what allows us to ask, in Section~\ref{sec:score}, how this
direction changes as $\theta$ moves.
The two views recur throughout the paper: fixing $\theta$ makes
$\tau$ a vector in a Hilbert space; fixing $s$
makes it a smooth function on $\Theta$.
\end{remark}

Condition (i) is Bahadur's condition~\eqref{eq:membership} together
with centering: at each $\theta$, $\tau(\cdot,\theta)$
lies in $V_{\theta}$. The centering
is what makes $\tau_{s}$ readable as a continuum of test statistics:
$\tau_{s}(\theta)$ is the observed value at $s$ of a statistic that
has mean zero when $\theta$ is true, so its magnitude measures the
discrepancy between $s$ and $\theta$, simultaneously for every
$\theta$. Estimation and testing, separated at the start of the
classical theory, are here the same act. Condition (ii) gives the
estimate enough smoothness to be assessed by how it responds as
$\theta$ moves; condition (iii) rules out the degenerate case.

Every non-degenerate smooth point estimator induces a generalized
estimator. If $t$ is an
estimate in the sense of Definition~\ref{def:estimate} having
positive variance and whose mean
function $\theta\mapsto E_{\theta}(t)$ is continuously differentiable,
then
\begin{equation}
t(s)-E_{\theta}(t)\label{eq:expectation-map}
\end{equation}
satisfies (i)--(iii); we call \eqref{eq:expectation-map} the
\emph{expectation generalized estimator} associated with $t$ and use
the notation $\tau_{E(t)}$ or simply $\tau_{E}$ when considering only
one point estimator. An
unbiased estimate of $g$ induces $\tau_{E}=t-g(\theta)$, whose sign and
magnitude at $(s,\theta)$ report how far the observed estimate sits
from the value it would be estimating if $\theta$ were true. A
different generalized estimator is obtained by using the
distribution of $t$ rather than just its mean. This generalized
estimator requires the material of Lecture 13 and is introduced in
Section~\ref{sec:score}. The archetype of a generalized estimator,
the score itself, likewise appears there; nothing in
the present section depends on it. 

Two payoffs can be stated immediately; the deeper ones---assessment by
information, uniform optimality, sufficiency as the equality
case---occupy Sections \ref{sec:score}--\ref{sec:heuristics}.

First, existence. A generalized estimate is a function on $\Theta$ and
does not require a well-behaved maximizer or root to exist. In
binomial sampling with $n$ trials, $y$ successes, and
$\Theta=(0,1)$, the sample $y=0$ supports no maximum likelihood
estimate of the odds $g(p)=p/(1-p)$---indeed no ML estimate of $p$ in
the open parameter space---yet the score at $y=0$,
$\tau_{y=0}(p)=-n/(1-p)$,
is defined, smooth, and usable for inference on all of $(0,1)$.

Second, invariance. The \emph{standardization}
$\bar{\tau}=\tau/\sqrt{\mathrm{Var}(\tau)}$ is the same function on the
family however the family is labeled: relabeling $\Theta$ by a smooth
bijection carries $\bar{\tau}$ to the corresponding function in the
new labels. 
Geometrically, at each $\theta$,
$\bar{\tau}=\tau/\|\tau\|$ is the direction vector of
the estimator (Remark~\ref{rem:geometry}), and the assessment of
Section~\ref{sec:score} measures how fast this direction changes
with $\theta$. 
Assessment built on $\bar{\tau}$ will therefore be
free of the parameterization, in contrast to mean square error,
whose value changes with the labeling of what is being estimated.

The modification discards nothing. Point estimates remain available
whenever they exist---a root of $\tau_{s}$ is a point estimate, and
\eqref{eq:expectation-map} embeds the classical objects into the generalized
class---and every theorem of Bahadur's development remains a theorem.
What changes is which object the theory is about, and, in consequence,
which assessments are natural.

\subsection{Lecture 5: risk and the approaches to making it
small\label{subsec:risk}}

Returning to Lecture 5: the quality of an estimate $t$ is classically
described through a loss function $L(t,g)$, with expected loss
\[
R_{t}(\theta)=E_{\theta}\bigl(L(t(s),g(\theta))\bigr)
\]
the \emph{risk function} of $t$; for $t$ to be good, $R_{t}$ should be
small. Bahadur motivates the square error choice by a smoothness
heuristic: if $L\ge0$ vanishes on the diagonal and is smooth in $t$,
then Taylor expansion about $t=g$ gives
$L(t,g)=\tfrac{1}{2}a(g)(t-g)^{2}+\cdots$ with $a(g)\ge0$, so that for
$a(g)>0$ the risk is locally proportional to
$E_{\theta}(t-g)^{2}$, the mean square error of $t$ at $\theta$. He
then assumes $R_{t}(\theta)=E_{\theta}(t-g)^{2}$ henceforth, records
the decomposition
\[
R_{t}(\theta)=\mathrm{Var}_{\theta}(t)+\bigl[b_{t}(\theta)\bigr]^{2},
\qquad b_{t}(\theta)=E_{\theta}(t)-g(\theta),
\]
and adds a note that will matter below: since
$EZ=\int_{0}^{\infty}P(Z\ge z)\,dz$ for $Z\ge0$,
\begin{equation}
R_{t}(\theta)=\int_{0}^{\infty}
P_{\theta}\bigl(|t(s)-g(\theta)|>\sqrt{z}\bigr)\,dz.\label{eq:pointwise-vivid}
\end{equation}
The risk at $\theta$ is a functional of the sampling distribution of
$t$ under $P_{\theta}$ and of nothing else. Equation
\eqref{eq:pointwise-vivid} makes this vivid, and it is the property on
which the lemma of Section~\ref{subsec:impossibility} turns.

There are, Bahadur says, several approaches to making $R_{t}$ small,
and he lists three before adding a fourth.
\begin{description}
\item[Admissibility:] $t$ is inadmissible if some $t'$ has
$R_{t'}(\theta)\le R_{t}(\theta)$ everywhere with strict inequality
somewhere; admissible otherwise (``the sure-thing principle'').
\item[Minimaxity:] $t_{0}$ is minimax if
$\sup_{\theta}R_{t_{0}}(\theta)\le\sup_{\theta}R_{t}(\theta)$ for all
$t$.
\item[Bayes estimation:] for a probability $\lambda$ on $\Theta$,
$t^{*}$ is Bayes if it minimizes the average risk
$\bar{R}_{t}=\int_{\Theta}R_{t}(\theta)\,d\lambda$.
\end{description}
The two one-line facts Bahadur records---a constant-risk Bayes estimate is
minimax, and an essentially unique Bayes estimate is
admissible---already display the structure of the subject: the three
notions support one another, and none of them asserts that any
estimate is uniformly best. Bahadur develops Bayes estimation in
Lectures 6--10; we do not, for a reason the next subsection makes
precise. 

The fourth approach is \emph{unbiasedness}: require
$E_{\theta}(t)=g(\theta)$, i.e.\ $b_{t}\equiv0$, so that
$R_{t}(\theta)=\mathrm{Var}_{\theta}(t)$, and then ask
(Bahadur's questions, verbatim in substance):
\begin{enumerate}
\item[(i)] Are there any unbiased estimates at all?
\item[(ii)] If so, which $t$, if any, has minimum variance at a given
$\theta$ (a \emph{locally} minimum-variance unbiased estimate)?
\item[(iii)] If a locally minimum-variance unbiased estimate exists,
is it independent of $\theta$? (If so it is the uniformly
minimum-variance unbiased estimate. If this estimate exists, what is
it?)
\end{enumerate}
Questions (i)--(iii) organize Chapters 3--5 of the lectures, and their
answers are among the permanent achievements of the classical theory.
But the questions themselves call for explanation. Why must the class
of estimates be cut down before ``best'' can be asked? Why is the
central question (iii) about the \emph{removal of a
$\theta$-dependence}? The following lemma is the explanation.

\subsection{An impossibility lemma\label{subsec:impossibility}}

Call an assessment criterion \emph{pointwise} if it assigns to an
estimate $t$ and a parameter value $\theta$ a risk
$R_{t}(\theta)\ge0$ computed from the sampling distribution of $t$
under $P_{\theta}$ alone, with $R_{t}(\theta)=0$ if and only if
$t=g(\theta)$ almost surely under $P_{\theta}$. Squared error is
pointwise---equation \eqref{eq:pointwise-vivid} exhibits
$R_{t}(\theta)$ as a functional of that sampling distribution---and so
is the risk built from any loss $L\ge0$ vanishing exactly on the
diagonal: absolute error, standardized quadratic loss, and
divergence-based losses among them.

\begin{lemma}\label{lem:pointwise}
Suppose $g$ is not constant on $\Theta$ and the family
$\{P_{\theta}\}$ has common support. If $R$ is pointwise, then no
estimate is uniformly $R$-optimal: there is no $t^{*}$ with
$R_{t^{*}}(\theta)\le R_{t}(\theta)$ for every estimate $t$ and every
$\theta\in\Theta$.
\end{lemma}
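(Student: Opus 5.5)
Argue by contradiction using constant estimates. Suppose $t^{*}$ satisfies $R_{t^{*}}(\theta)\le R_{t}(\theta)$ for every estimate $t$ and every $\theta\in\Theta$. For a fixed $\theta_{0}\in\Theta$, the constant function $t_{0}\equiv g(\theta_{0})$ is an estimate in the sense of Definition~\ref{def:estimate}, since $E_{\theta}(t_{0}^{2})=g(\theta_{0})^{2}<\infty$ for all $\theta$. Under $P_{\theta_{0}}$ its sampling distribution is the point mass at $g(\theta_{0})$, so $t_{0}=g(\theta_{0})$ almost surely, and the defining property of a pointwise criterion gives $R_{t_{0}}(\theta_{0})=0$. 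Uniform optimality then forces $0\le R_{t^{*}}(\theta_{0})\le 0$. By the ``only if'' half of the definition, $t^{*}=g(\theta_{0})$ holds $P_{\theta_{0}}$-almost surely.

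Next, use non-constancy of $g$ and the common support. Choose $\theta_{1},\theta_{2}\in\Theta$ with $g(\theta_{1})\neq g(\theta_{2})$. The first step, applied at each of them, gives
\[
P_{\theta_{1}}\bigl(t^{*}=g(\theta_{1})\bigr)=1,
\qquad
P_{\theta_{2}}\bigl(t^{*}=g(\theta_{2})\bigr)=1.
\]
The events $A_{1}=\{t^{*}=g(\theta_{1})\}$ and $A_{2}=\{t^{*}=g(\theta_{2})\}$ are disjoint. Hence $P_{\theta_{1}}(A_{2})=0$ while $P_{\theta_{2}}(A_{2})=1$. Because the standing assumption of Section~\ref{subsec:formulation} makes the members of $\mathcal{P}$ mutually absolutely continuous, $P_{\theta_{1}}(A_{2})=0$ implies $P_{\theta_{2}}(A_{2})=0$, which is a contradiction. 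So no such $t^{*}$ exists.

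The only delicate point is reading ``common support'' as mutual absolute continuity, so that null sets are shared across the family; this is exactly the assumption recorded in Section~\ref{subsec:formulation}. I would also note that the argument never needs the specific form of $R$. It uses only three facts: each risk is computed at $\theta$ from $P_{\theta}$ alone; a constant estimate attains zero risk at its own parameter value; and zero risk pins $t$ down $P_{\theta}$-almost surely. The lemma therefore covers squared error, absolute error, and the other pointwise losses listed before the statement.

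For the reading that follows, the proof also shows the mechanism. Each $\theta_{0}$ has its own trivial ``best'' estimate, the constant $g(\theta_{0})$, and any uniform optimum would have to coincide with all of them at once. This is the $\theta$-dependence that question (iii) of the unbiased theory tries to remove by restricting the class of estimates.
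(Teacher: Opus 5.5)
Your proof is correct and follows the paper's argument exactly. A constant estimate has zero risk at its own parameter value, so a uniform optimizer must equal both $g(\theta_{1})$ and $g(\theta_{2})$ almost surely under two mutually absolutely continuous distributions, which is impossible; your check that the constant is a valid estimate and your disjoint-events form of the final contradiction spell out steps the paper leaves implicit.
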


\begin{proof}
Choose $\theta_{0},\theta_{1}$ with
$g(\theta_{0})\neq g(\theta_{1})$. The constant estimate
$t\equiv g(\theta_{0})$ has $R_{t}(\theta_{0})=0$, so a uniform
optimizer $t^{*}$ must satisfy $R_{t^{*}}(\theta_{0})=0$, hence
$t^{*}=g(\theta_{0})$ a.s.\ $P_{\theta_{0}}$. The same argument at
$\theta_{1}$ gives $t^{*}=g(\theta_{1})$ a.s.\ $P_{\theta_{1}}$. By
common support, $P_{\theta_{0}}$ and $P_{\theta_{1}}$ are mutually
absolutely continuous, so $t^{*}$ is almost surely equal to two
different constants on the same sample space---a contradiction.
\end{proof}

The lemma is elementary, and that is its point. The absence of a
uniformly best estimate is not a defect of squared error that a better
loss might repair: no reweighting, no standardization, no
invariant loss escapes, because all remain pointwise. Nor is
smoothness the issue; the lemma applies equally to the
Chapman--Robbins setting of Lecture 15 where the score does not exist.
The impossibility is structural. A criterion computed distribution by
distribution cannot single out one estimate as best for the family as
a whole, because at each single distribution the criterion is
minimized by an answer---the constant---that uses no data at all. 
Read against the lemma, Lecture 5's list is no longer a list of four
independent research programs. Each entry is a response to the same
impossibility, and each responds by changing the question.
Admissibility lowers the bar from ``best'' to ``not everywhere
beaten.'' Minimaxity and Bayes averaging replace the risk
\emph{function} by a scalar---its supremum or its
$\lambda$-average---so that a total order exists by construction.
Unbiasedness excludes the lemma's witness: the constant estimates are
the first casualties of the constraint $b_{t}\equiv0$, after which
minimum variance can be pursued within what remains, and questions
(i)--(iii) are exactly the questions of how far that pursuit can be
carried. These responses produced deep and useful mathematics. The
point is only that all four are responses \emph{to} the
impossibility: what forces them is the pointwise criterion, not the
estimation problem itself. 

One further reading of the lemma sets up the rest of the paper. The
proof turns on the fact that a pointwise criterion evaluates $t$ at
$\theta$ using $P_{\theta}$ alone: the criterion cannot see how $t$
behaves \emph{across} the family, so it cannot penalize the constant
estimate for its uselessness everywhere else. A criterion that escapes
the lemma must therefore be \emph{family-aware}: it must use, at
$\theta$, information about the behavior of the estimator in a
neighborhood of $\theta$ within the family. Bahadur's definition of an
estimate---membership in every $V_{\theta}$ at once---already provides
the domain on which such a criterion can operate, and the generalized
estimator of Definition~\ref{def:generalized} is the object fitted to
that domain: $\tau$ has, at each $\theta$, both a distribution
(condition (i)) and a response to movement of $\theta$ (condition
(ii)). 
The information $\Lambda(\tau)$ of
Section~\ref{sec:score} is built from precisely these two ingredients,
and it will do what the lemma says no pointwise criterion can: admit a
uniform optimum, with a short proof, and with no auxiliary apparatus.

\section{Unbiased estimation in Bahadur's Hilbert space\label{sec:unbiased}}

\subsection{Lecture 11: likelihood ratios and the family seen from a
single distribution\label{subsec:LR}}

The machinery of Chapter 3 of the lectures is fixed in three steps.
Fix $\theta\in\Theta$ and thereby a distribution in the family. By
mutual absolute continuity there is for each
$\delta\in\Theta$ a likelihood ratio
$\Omega_{\delta,\theta}=dP_{\delta}/dP_{\theta}$ (explicitly,
$\Omega_{\delta,\theta}=\ell_{\delta}/\ell_{\theta}$ when densities with
respect to a common $\mu$ exist), and it is assumed throughout that
$\Omega_{\delta,\theta}\in V_{\theta}$. Let
\[
W_{\theta}=\text{the subspace of }V_{\theta}\text{ spanned by }
\{\Omega_{\delta,\theta}:\delta\in\Theta\},
\]
which contains the constants since
$\Omega_{\theta,\theta}=1$. The reason $W_{\theta}$ is the right
space to address unbiasedness is a one-line identity: for any
estimate $t$ and any
$\delta$,
\begin{equation}
E_{\delta}(t)=\int_{S}t\,\Omega_{\delta,\theta}\,dP_{\theta}
=(t,\Omega_{\delta,\theta})_{\theta},\label{eq:family-inner}
\end{equation}
where $(\cdot,\cdot)_{\theta}$ is the inner product of $V_{\theta}$.
The mean of $t$ under \emph{every} member of the family is an inner
product in the \emph{single} space $V_{\theta}$: the family, as it
bears on estimation, is encoded in the geometry at $\theta$, and
$W_{\theta}$ is the family as seen from $\theta$.

\subsection{Lectures 11--12: the locally best unbiased
estimate\label{subsec:LMVUE}}

Let $U_{g}$ be the class of unbiased estimates of $g$, assumed
non-empty, so that $R_{t}(\theta)=\mathrm{Var}_{\theta}(t)$ on
$U_{g}$. Bahadur's item 8 answers question (ii) completely. Write
$\pi=\pi_{W_{\theta}}$ for orthogonal projection onto $W_{\theta}$.

\begin{bitem}{Item 8}
For $t\in U_{g}$, let $\tilde{t}=\pi t$. Then
\begin{enumerate}
\item[(a)] $\tilde{t}$ is the (essentially) unique member of
$U_{g}\cap W_{\theta}$;
\item[(b)] $\tilde{t}$ is the projection onto $W_{\theta}$ of
\emph{every} $t\in U_{g}$;
\item[(c)] $\tilde{t}$ has minimum variance at $\theta$ within
$U_{g}$.
\end{enumerate}
\end{bitem}

The proof is the
geometry of Section~\ref{subsec:formulation} plus
identity~\eqref{eq:family-inner}: since $\pi$ is self-adjoint and
fixes $W_{\theta}$,
\[
E_{\delta}(\pi t)=(\pi t,\Omega_{\delta,\theta})_{\theta}
=(t,\pi\Omega_{\delta,\theta})_{\theta}
=(t,\Omega_{\delta,\theta})_{\theta}=E_{\delta}(t)
\quad\text{for all }\delta\in\Theta,
\]
so projection onto $W_{\theta}$ preserves every expectation in the
family---it discards exactly the part of $t$ the family cannot
see---and, being a projection, it does not increase the norm, hence
not the variance. The locally minimum-variance unbiased estimate
(LMVUE) at $\theta$ is the family-visible part of any unbiased
estimate.

Lecture 12 restates and extends this.

\begin{bitem}{Item 9}
An estimate $t$ with finite variances is the LMVUE at $\theta$ of
its own expected value if and only if $t\in W_{\theta}$; it is the
UMVUE if and only if
$t\in C=\bigcap_{\theta\in\Theta}W_{\theta}$.
\end{bitem}

\begin{bitem}{Item 10 (Lehmann--Scheff\'{e})}
Write $\tilde{U}$ for the class of $u$ with $E_{\delta}(u)=0$ for
all $\delta$. A $t$ with finite variances belongs to $C$ if and only
if $E_{\theta}(tu)=0$ for every $\theta\in\Theta$ and every
$u\in\tilde{U}$.
\end{bitem}

Questions
(i)--(iii) are thereby reduced to concrete geometry: existence of
unbiased estimates, projection at each $\theta$, and the intersection
$C$. To item 8 Bahadur appends a note, and the note is the hinge of this
paper: ``$\tilde{t}$ often depends on $\theta$, and this is the
problem in practice.''

\subsection{The problem in practice\label{subsec:M3}}

The classical response to the note is question (iii): find conditions
under which the $\theta$-dependence vanishes. Chapters 4 and 5 of the
lectures can be read as the systematic pursuit of this
question---spans of likelihood derivatives, attainment of bounds,
exponential structure, sufficiency---and the pursuit succeeds exactly
in the families where $W_{\theta}$ essentially does not depend on
$\theta$. Outside those families the LMVUE remains locally tuned, and
the classical theory regards this as its central practical defect.

The modification of Section~\ref{subsec:modification} responds
differently: it assembles what the classical theory already built.
The map $\theta\mapsto\tilde{t}$, with
$\tilde{t}=\pi_{W_{\theta}}t$ the LMVUE at $\theta$, defines
a single function on $S\times\Theta$, namely
$(s,\theta)\mapsto\tilde{t}(s)$. Recentering this yields 
\begin{equation}
\tau(s,\theta)=\tilde{t}(s)-g(\theta).\label{eq:assembled}
\end{equation}
Since $\tilde{t}\in U_{g}$, we have
$E\bigl(\tau(s,\theta)\bigr)=0$ for every
$\theta$---condition (i) of Definition~\ref{def:generalized}
exactly---and, granted the smoothness in $\theta$ that Bahadur's next
chapter assumes anyway, $\tau$ is a generalized estimator. The
classical theory's own locally best objects, granted their
$\theta$-dependence rather than pardoned for it, assemble into one
generalized estimator. Question (iii) asks when \eqref{eq:assembled}
collapses to the point-estimator form $t-g(\theta)$, constant in
$\theta$ up to the recentering. The question for generalized
estimation is not
whether the object collapses but how good is it, and 
Section~\ref{sec:score} answers: its first-order version---the
projection onto Bahadur's span of $\{1,\gamma_{\theta}^{(1)}\}$,
recentered---is $g'(\theta)\gamma_{\theta}^{(1)}/I$, which has the direction of
the score and is fully efficient under the information criterion
introduced there.

Seen this way, the local dependence is not a pathology. At each
$\theta$, $\tilde{t}$ is tuned to distinguish $P_{\theta}$
from its neighbors in the family; a single $t$ that is best
everywhere exists only in the special families where one function can
be simultaneously tuned to every neighborhood. Bahadur's phrase
describes the structure of the correct inferential object, encumbered by
a theory committed to a different one. 

\subsection{Lecture 12: a two-point parameter
space\label{subsec:two-point}}

Lecture 12 closes with Example 1(d), which deserves retelling because
Bahadur draws the moral himself. Let $s=(X_{1},\ldots,X_{n})$ with
$X_{i}$ iid $N(\theta,1)$ and $\Theta=\{1,2\}$, and let
$g(\theta)=\theta$. Here
$W_{\theta}=\mathrm{Span}\{1,e^{n\bar{X}}\}$, so the LMVUE of $g$ at
$\theta=1$ has the form $a+be^{n\bar{X}}$ with $a,b$ determined by
the two unbiasedness equations. The result is an ``estimate'' of a
parameter known to lie in $\{1,2\}$ that ranges over
$(-\infty,\infty)$, with $b>0$, and the LMVUE at $\theta=2$ is a
different function altogether; $C$ contains only constants, so no
non-trivial UMVUE exists. Bahadur: ``This is absurd. MSE is not
suitable because $g$ takes on only two values.'' His prescriptions:
the Neyman--Pearson theory indicates an estimate of the form
$a+bI_{A}$ with $A=\{\bar{X}>c\}$; and ``we should restrict the
estimation theory to a continuum of values (i.e., should have only
connected $\Theta$)''---the standing assumption of
Section~\ref{subsec:formulation} and very close to Fisher's view of
estimation as a continuum of significance tests. 

The generalized reading agrees with both prescriptions and explains
them. With two distributions the inferential task is transparently
discrimination, and the natural object is a test statistic---which is
what a generalized estimate is at every $\theta$, here reduced to a
pair. No absurdity can arise because nothing is required to take
values in $\Theta$. At the same time, the generalized theory does not
manufacture an estimator where Bahadur says none should be sought:
condition (ii) of Definition~\ref{def:generalized} requires a
continuum of parameter values just as Bahadur's prescription does,
because the assessment of Section~\ref{sec:score} is built from
derivatives in $\theta$. On a finite $\Theta$ both theories hand the
problem to Neyman and Pearson, the classical one by retreat, the
generalized one by construction.

\section{Score, information, and bounds\label{sec:score}}

\subsection{Lecture 13: the score and the spans
\texorpdfstring{$W_{\theta}^{(k)}$}{W(k)}\label{subsec:L13}}

Let $\Theta$ be an open interval, $dP_{\theta}=\ell_{\theta}\,d\mu$
with $\ell_{\theta}(s)>0$, and let dashes denote derivatives with
respect to the parameter, as in the lectures. The \emph{score} at
$\theta$ is
\[
\gamma_{\theta}^{(1)}(s)=\frac{\ell_{\theta}'(s)}{\ell_{\theta}(s)}
=\Omega_{\delta,\theta}'\big|_{\delta=\theta},
\]
and more generally
$\gamma_{\theta}^{(j)}=\ell_{\theta}^{(j)}/\ell_{\theta}$, with
$E_{\theta}(\gamma_{\theta}^{(j)})=0$ for all $j$ under the differentiation
conditions Bahadur defers (as do we) to the exact statements of
Lecture 16. The \emph{Fisher information} is
$I=E_{\theta}\bigl(\gamma_{\theta}^{(1)}\bigr)^{2}
=\mathrm{Var}_{\theta}(\gamma_{\theta}^{(1)})$, and it is additive across
independent experiments.

Much of the chapter consists of instances of a single identity, and
it is worth stating the identity. For suitably regular
$f:S\times\Theta\rightarrow\mathbb{R}$ with
$f(\cdot,\theta)\in V_{\theta}$, differentiating
$E_{\theta}\bigl(f(s,\theta)\bigr)
=\int f(s,\theta)\,\ell_{\theta}(s)\,d\mu$ in $\theta$ gives 
\begin{equation}
\bigl(E_{\theta}f\bigr)'
=E_{\theta}(f')+E_{\theta}\bigl(\gamma_{\theta}^{(1)}f\bigr).\label{eq:master}
\end{equation}
For $f=t$ free of $\theta$ and unbiased for $g$,
\eqref{eq:master} reads
$g'(\theta)=E_{\theta}(\gamma_{\theta}^{(1)}t)
=(\gamma_{\theta}^{(1)},t)_{\theta}$---Bahadur's key computation in this
lecture. For $f=\gamma_{\theta}^{(1)}$ it reads
$0=E_{\theta}(L_{\theta}'')+I$, since
$\gamma_{\theta}^{(1)}=L_{\theta}'$ for $L_{\theta}=\log\ell_{\theta}$; this
is Bahadur's item 13, $E_{\theta}(L_{\theta}'')=-I$, and iterating
\eqref{eq:master} on the score produces the Bartlett identities. The
identity will carry the main theorem of
Section~\ref{subsec:Lambda} as well; the lectures and the
generalized theory run on the same engine.

The Taylor expansion
\[
\Omega_{\delta,\theta}
=1+(\delta-\theta)\gamma_{\theta}^{(1)}
+\tfrac{1}{2}(\delta-\theta)^{2}\gamma_{\theta}^{(2)}+\cdots
\]
 suggests
$W_{\theta}=\mathrm{Span}\{1,\gamma_{\theta}^{(1)},\gamma_{\theta}^{(2)},\ldots\}$,
an equality that holds exactly in a one-parameter exponential family
(Lecture 17) and approximately in large samples. In any case the
finite spans
$W_{\theta}^{(k)}
=\mathrm{Span}\{1,\gamma_{\theta}^{(1)},\ldots,\gamma_{\theta}^{(k)}\}
\subseteq W_{\theta}$ are available, every $t\in U_{g}$ has the same
projection $t_{\theta,k}^{*}$ onto $W_{\theta}^{(k)}$, and comparing
norms gives:

\begin{bitem}{Item 11 (Bhattacharya bounds)}
For each $t\in U_{g}$ and $k=1,2,\ldots$,
\[
\mathrm{Var}_{\theta}(t)\ge
E_{\theta}(t_{\theta,k}^{*})^{2}-g(\theta)^{2},
\]
and the bounds are non-decreasing in $k$.
\end{bitem}

For $k=1$,
$\{1,\gamma_{\theta}^{(1)}/\sqrt{I}\}$ is an orthonormal basis of
$W_{\theta}^{(1)}$; using $(1,t)_{\theta}=g(\theta)$ and
$(\gamma_{\theta}^{(1)},t)_{\theta}=g'(\theta)$,
\begin{equation}
t_{\theta,1}^{*}
=g(\theta)+\frac{g'(\theta)}{I}\,\gamma_{\theta}^{(1)},\qquad
\|t_{\theta,1}^{*}\|^{2}=g(\theta)^{2}+\frac{(g'(\theta))^{2}}{I},
\label{eq:k1-projection}
\end{equation}
and the $k=1$ bound is the \emph{information inequality}:

\begin{bitem}{Item 12 (Fisher--Darmois--Cram\'{e}r--Rao)}
For all $t\in U_{g}$,
\begin{equation}
\mathrm{Var}_{\theta}(t)\ge\frac{(g'(\theta))^{2}}{I}.\label{eq:CR}
\end{equation}
\end{bitem}

\subsection{The information of a generalized estimator\label{subsec:Lambda}}

The convention of Remark~\ref{rem:notation} is in force: $E$ and
$\mathrm{Var}$ applied to generalized estimators carry no subscript.
The quantity now to be introduced, $\Lambda$, joins them: it is a
function on $\Theta$ attached to the family, and under a smooth
relabeling of $\Theta$, $\Lambda$ and $I$ transform identically, so
that ratios of the two are label-free. This is the invariance
promised in Section~\ref{subsec:modification}.

Let $\tau$ be a generalized estimator
(Definition~\ref{def:generalized}), $\mathrm{Var}(\tau)$
its variance function, and
$\bar{\tau}=\tau/\sqrt{\mathrm{Var}(\tau)}$ its standardization.

\begin{definition}\label{def:Lambda}
The \emph{information utilized by} $\tau$ is
\begin{equation}
\Lambda(\tau)=\bigl(E(\bar{\tau}')\bigr)^{2}
=\frac{\bigl(E(\tau')\bigr)^{2}}{\mathrm{Var}(\tau)}.\label{eq:Lambda}
\end{equation}
\end{definition}

The definition already fixes the statistical content of $\Lambda$, and
it is worth reading off before any bound. A generalized estimator uses
the sample $s$ to discriminate among the distributions of the family,
and it does this well when its estimate $\tau_{s}(\theta)$ changes
quickly as $\theta$ moves---when the graph of $\tau_{s}$ over $\Theta$
is steep. No estimator has the steepest graph at every $s$ at once, but
the slope averaged over $s$ has a largest value, and it is this average
that $\Lambda$ records: with the standardization $\bar{\tau}$ that puts
all estimators on equal footing,
$\Lambda^{1/2}(\tau)=\bigl|E(\bar{\tau}')\bigr|$ is the average slope of
$\bar{\tau}$. The reading is at once geometric and statistical.
Geometrically, $\bar{\tau}$ is a direction---a unit vector orthogonal to
the constants at every $\theta$---and $\Lambda^{1/2}$ is the mean rate
at which that direction turns as $\theta$ moves. Statistically,
$\bar{\tau}$ is a test statistic with mean zero and standard deviation
one at every $\theta$, and $\Lambda^{1/2}$ is the mean rate at which it
responds to the hypothesis being tested. That one quantity carries both
readings---the fundamental object of the geometry and the
standardization basic to statistical comparison---is part of what
recommends it. $\Lambda$ is thus family-aware in exactly the way
Lemma~\ref{lem:pointwise} requires, and at an isolated distribution it
is undefined. The lemma's witness fares as badly as it should: a
constant estimate $t\equiv c$ induces $\tau_{E(t)}=c-E(c)=0$, excluded
by condition (iii), and any estimator whose mean response $E(\tau')$
vanishes identically has $\Lambda=0$---the worst value, not the best.
The criterion punishes exactly what pointwise criteria reward.

Applying \eqref{eq:master} to a generalized estimator, for which
$E\bigl(\tau(s,\theta)\bigr)\equiv0$ on $\Theta$, gives
the \emph{score equation}
\begin{equation}
E(\tau')+E\bigl(\gamma^{(1)}\tau\bigr)=0.
\label{eq:score-eq}
\end{equation}

\begin{theorem}\label{thm:optimality}
For every generalized estimator $\tau$ and every
$\theta\in\Theta$,
\[
\Lambda(\tau)
=\Bigl[\mathrm{Corr}\bigl(\tau,\gamma^{(1)}\bigr)\Bigr]^{2}\,I
\;\le\;I,
\]
with equality at $\theta$ if and only if $\tau(\cdot,\theta)$ is
almost surely a scalar multiple of $\gamma_{\theta}^{(1)}$. The score is a
generalized estimator with $\Lambda(\gamma^{(1)})=I$; it attains the
bound at every $\theta\in\Theta$.
\end{theorem}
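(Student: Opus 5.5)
The plan is to run everything through the score equation \eqref{eq:score-eq} and then apply Cauchy--Schwarz in $V_{\theta}$. Fix $\theta$. Because $E(\tau)\equiv0$ on $\Theta$, applying \eqref{eq:master} with $f=\tau$ gives $E(\tau')=-E(\gamma^{(1)}\tau)=-(\tau,\gamma_{\theta}^{(1)})_{\theta}$. Substituting this into Definition~\ref{def:Lambda} yields
\[
\Lambda(\tau)=\frac{\bigl(E(\gamma^{(1)}\tau)\bigr)^{2}}{\mathrm{Var}(\tau)}.
\]
Both $\tau(\cdot,\theta)$ and $\gamma_{\theta}^{(1)}$ have mean zero under $P_{\theta}$. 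Hence $E(\gamma^{(1)}\tau)$ is their covariance, while $\mathrm{Var}(\tau)$ and $I$ are their variances. Multiplying and dividing by $I$ then gives $\Lambda(\tau)=[\mathrm{Corr}(\tau,\gamma^{(1)})]^{2}I$. Condition (iii) guarantees $\mathrm{Var}(\tau)>0$, and we assume $I>0$ at $\theta$, so the correlation is well defined.

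The bound and the equality case come from Cauchy--Schwarz in $V_{\theta}$: $(\tau,\gamma_{\theta}^{(1)})_{\theta}^{2}\le\|\tau\|^{2}\|\gamma_{\theta}^{(1)}\|^{2}$. Equality holds if and only if $\tau(\cdot,\theta)$ and $\gamma_{\theta}^{(1)}$ are linearly dependent in $V_{\theta}$. Since $\tau(\cdot,\theta)$ is nonzero by (iii), this is the same as $\tau(\cdot,\theta)=c\,\gamma_{\theta}^{(1)}$ almost surely for some scalar $c$. In the direction just proved, $c\neq0$ is forced by (iii). Conversely, any nonzero multiple gives squared correlation $1$.

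For the final sentence, I check that the score itself satisfies Definition~\ref{def:generalized}. Condition (i) holds because $E_{\theta}(\gamma_{\theta}^{(1)})=0$ and $E_{\theta}(\gamma_{\theta}^{(1)})^{2}=I<\infty$. Condition (iii) holds because $I>0$. Condition (ii), continuous differentiability of $\theta\mapsto\ell_{\theta}'(s)/\ell_{\theta}(s)$, is one of the regularity conditions deferred to Lecture 16, so I would state it as an assumption. With these in place, the equality case at $c=1$ gives $\Lambda(\gamma^{(1)})=I$ at every $\theta$. As a consistency check, \eqref{eq:master} with $f=\gamma^{(1)}$ gives $E(\gamma^{(1)\prime})=-I$ (item 13). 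Then $\Lambda=I^{2}/I=I$ directly.

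I expect the main obstacle to be justification rather than computation. Passing from \eqref{eq:master} to \eqref{eq:score-eq} requires differentiating under the integral sign for a general $\tau$, and Definition~\ref{def:generalized} only guarantees pointwise $C^{1}$ smoothness. I would handle this the way the paper does, by taking \eqref{eq:master} as holding for the generalized estimators under consideration (the ``suitably regular'' proviso), and I would flag that the theorem inherits this proviso. The remaining steps are a direct Hilbert-space argument.
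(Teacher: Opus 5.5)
Your proposal is correct and follows the paper's proof. In both, the score equation \eqref{eq:score-eq} turns $(E(\tau'))^{2}$ into $E(\gamma^{(1)}\tau)^{2}$, which gives $\Lambda(\tau)=\mathrm{Corr}^{2}(\tau,\gamma^{(1)})\,I$, and Cauchy--Schwarz in $V_{\theta}$ supplies the bound and its equality case. The differences are minor: you apply \eqref{eq:score-eq} to $\tau$ rather than to $\bar{\tau}$, which spares you differentiating $\mathrm{Var}(\tau)$. You also check explicitly that the score satisfies Definition~\ref{def:generalized}, rightly flagging that condition (ii) for $\gamma^{(1)}$ requires second-order smoothness of $\ell_{\theta}$, which the paper leaves implicit.
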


\begin{proof}
By \eqref{eq:score-eq},
$E(\bar{\tau}')=-E(\gamma^{(1)}\bar{\tau})$, so
$\Lambda(\tau)=E^{2}(\gamma^{(1)}\bar{\tau})
=\mathrm{Corr}^{2}(\tau,\gamma^{(1)})\,I$, and the
inequality with its equality condition is Cauchy--Schwarz. Taking
$\tau=\gamma^{(1)}$ gives correlation one.
\end{proof}

Efficiency is classically a matter of variances---the ratio of the
Cram\'{e}r--Rao bound to the variance of an estimator, or of one
variance to another. Here it is a matter of information, and, by the
reading just given, information is squared average slope.

\begin{definition}\label{def:Eff}
The \emph{efficiency} of a generalized estimator $\tau$ is
\[
\mathrm{Eff}(\tau)=\frac{\Lambda(\tau)}{I},
\]
the information utilized by $\tau$ as a fraction of the Fisher
information $I$.
\end{definition}

The definition carries the statistical content; the theorem supplies the geometry by which we obtain
the bound and its extremal case.

\begin{corollary}\label{cor:eff-corr}
For every generalized estimator $\tau$ and every $\theta\in\Theta$,
\[
\mathrm{Eff}(\tau)=\mathrm{Corr}^{2}\bigl(\tau,\gamma^{(1)}\bigr)\in[0,1],
\]
with $\mathrm{Eff}(\tau)=1$ if and only if $\tau(\cdot,\theta)$ is
almost surely a scalar multiple of $\gamma_{\theta}^{(1)}$. The score is
the unique direction of full efficiency.
\end{corollary}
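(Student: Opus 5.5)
The corollary follows from Theorem~\ref{thm:optimality} by dividing through by $I$. Definition~\ref{def:Eff} gives $\mathrm{Eff}(\tau)=\Lambda(\tau)/I$, and the theorem gives $\Lambda(\tau)=\mathrm{Corr}^{2}(\tau,\gamma^{(1)})\,I$ at every $\theta$. So, provided $0<I<\infty$ at $\theta$, we get $\mathrm{Eff}(\tau)=\mathrm{Corr}^{2}(\tau,\gamma^{(1)})$ pointwise in $\theta$. I will state explicitly that $I>0$ is assumed. It is implicit wherever Definition~\ref{def:Eff} is meaningful, and it is also needed for the correlation with $\gamma^{(1)}$ to be defined: $\gamma^{(1)}_\theta$ must have positive variance, just as $\tau$ does by condition (iii) of Definition~\ref{def:generalized}.

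Next comes membership in $[0,1]$. A squared correlation is nonnegative, and it is at most one by Cauchy--Schwarz in $V_\theta$. Because both $\tau(\cdot,\theta)$ and $\gamma_\theta^{(1)}$ have mean zero, the correlation is just the $V_\theta$ inner product of the two unit vectors $\bar\tau$ and $\gamma^{(1)}_\theta/\sqrt I$.

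For the equality case, equality in Cauchy--Schwarz holds exactly when the two unit vectors are equal up to sign. Thus $\mathrm{Eff}(\tau)=1$ at $\theta$ if and only if $\tau(\cdot,\theta)=c\,\gamma_\theta^{(1)}$ almost surely for some real $c$. Condition (iii) forces $c\neq0$. This is the equality clause of Theorem~\ref{thm:optimality} restated, so I will cite it rather than reprove it.

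The last sentence, that the score is the unique direction of full efficiency, needs the most care, though it is not difficult. I will read ``direction'' in the sense of Remark~\ref{rem:geometry}, namely $\bar\tau(\cdot,\theta)=\tau/\|\tau\|$. Full efficiency at $\theta$ means $\bar\tau(\cdot,\theta)=\pm\gamma^{(1)}_\theta/\sqrt I$ almost surely. If full efficiency holds at every $\theta$, then $\bar\tau$ coincides with the standardized score $\pm\bar\gamma^{(1)}$ throughout. One point needs checking: the sign cannot flip from one $\theta$ to another. I will handle this with continuity. By condition (ii), $\theta\mapsto\tau(s,\theta)$ is continuous, and $\mathrm{Var}(\tau)$, which I will take to be continuous as well, is positive. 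Hence the function $c(\theta)$ in $\tau=c(\theta)\gamma^{(1)}_\theta$ is continuous and nonvanishing on the connected set $\Theta$, so its sign is constant. For the existence half, Theorem~\ref{thm:optimality} already shows that the score attains $\mathrm{Eff}=1$ at every $\theta$.
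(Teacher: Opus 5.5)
Your proposal is correct and is the paper's own argument: the corollary comes from dividing the identity of Theorem~\ref{thm:optimality} by $I$, which is positive by Condition~3 of Lecture~16, and the range $[0,1]$ and the equality clause are inherited from Cauchy--Schwarz. Your sign-continuity argument goes beyond what the statement asks for, because the paper reads ``direction'' as the line spanned by $\gamma^{(1)}_\theta$ at each $\theta$ (any non-vanishing $A(\theta)\gamma^{(1)}$ of either sign counts), so the pointwise equality case already suffices.
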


%\begin{proof}
%Divide the identity of Theorem~\ref{thm:optimality} by $I$.
%\end{proof}

Three comments on the theorem and its corollary, in increasing order of
importance.

First, the optimum is a direction, not a single function: any
$A(\theta)\gamma^{(1)}$ with $A$ smooth and non-vanishing attains the
bound. The multiplicity is no qualification. At each $\theta$ a
generalized estimate is a test statistic, and rescaling by
$A(\theta)>0$ changes no rejection region; the geometry identifies
exactly what the statistics cannot distinguish.
Section~\ref{sec:discussion} returns to this point. 

Second, the theorem should be taken at full strength. It is uniform
over $\Theta$; its proof is three lines from
identity~\eqref{eq:master}, the same identity the lectures already
run on; and it requires no restriction of the competing class---no
unbiasedness, no exclusion of degenerate rivals, no auxiliary notion
to break ties. 
The competing class is in fact larger than Bahadur's: the theorem
quantifies over every $\tau$ with $\tau(\cdot,\theta)\in V_{\theta}$,
not only over $W_{\theta}$.

Third, the classical inequality \eqref{eq:CR} is contained in the
theorem, read in the other direction. For $t\in U_{g}$ the
expectation generalized estimator of
Section~\ref{subsec:modification} is $\tau_{E(t)}=t-g(\theta)$, for which
$\tau_{E(t)}'=-g'$ and $\mathrm{Var}(\tau_{E(t)})=\mathrm{Var}(t)$, so
\begin{equation}
\Lambda(\tau_{E(t)})=\frac{(g'(\theta))^{2}}{\mathrm{Var}(t)}
\;\le\;I,\label{eq:CR-rearranged}
\end{equation}
which is \eqref{eq:CR} rearranged. Bahadur reads the inequality as
bounding the variance of $t$, a point-wise criterion, by a quantity computed from
the family; \eqref{eq:CR-rearranged} reads the same
inequality as assessing the information, a family-aware criterion, against the
family benchmark---like compared with like. And the generalized
reading needs no unbiasedness: for arbitrary $t$ with smooth mean
function $h(\theta)=E_{\theta}(t)$,
$\Lambda(\tau_{E(t)})=(h')^{2}/\mathrm{Var}(t)$, the same
formula with the mean function in place of the estimand. The
constraint $b_{t}\equiv0$ was a response to pointwise assessment
(Section~\ref{subsec:impossibility}); the information criterion
assesses biased and unbiased estimators by one formula.

The assembled LMVUE of Section~\ref{subsec:M3} can now be assessed.
By \eqref{eq:k1-projection}, its first-order version recentered is
$t_{\theta,1}^{*}-g(\theta)=g'(\theta)\gamma^{(1)}/I$: a smooth
multiple of the score, hence fully efficient. The object the classical theory constructs at each
$\theta$ and then asks to stop depending on $\theta$ is, granted its
$\theta$-dependence, optimal.
Bahadur's higher spans now take their place beside the apparatus of
Section~\ref{subsec:impossibility}. Write
$\tau(\cdot,\theta)=a\gamma^{(1)}+\tau^{\perp}$ with
$\tau^{\perp}\perp\gamma^{(1)}$ at $\theta$: the numerator of
$\Lambda$ sees only the score component, since
$E(\gamma^{(1)}\tau)$ is unchanged by $\tau^{\perp}$, while
$\mathrm{Var}(\tau)$ grows with it. Components in
$W_{\theta}^{(k)}\setminus W_{\theta}^{(1)}$---indeed any component
orthogonal to the score---add variance and never information, so the
optimal generalized estimator lies in the span of the score, and the
higher spans cannot improve it. They matter classically because an
estimator constrained to be unbiased for a fixed $g$ cannot in
general lie in $W_{\theta}^{(1)}$, and the Bhattacharya bounds price
that constraint (Lecture 18's negative binomial, where
$b_{k}=\mathrm{Var}_{\theta}(\pi_{W_{\theta}^{(k)}}t)
\uparrow\sigma^{2}$, is exactly this). %% [R10d-10]
Once the constraint is
dropped, the higher spans join admissibility, minimaxity, and
unbiasedness: apparatus that is not needed when a generalized
estimator is the inferential object.

The generalized estimator promised in
Section~\ref{subsec:modification}---built from the distribution of
$t$ rather than just its mean---is now available, and it is the more
principled of the two. A statistic $t$ induces the marginal family
$\{P_{\theta}\circ t^{-1}:\theta\in\Theta\}$ on its range; let the
induced family satisfy the conditions of this section. The
\emph{score generalized estimator} associated with $t$, written
$\tau_{t}$, is the score of this marginal family, composed with $t$
so as to be a function on $S\times\Theta$. 
%(In
%$\tau_{t}$ the subscript names a statistic, as in $\tau_{E(t)}$; in
%the $\tau_{s}$ of Definition~\ref{def:generalized} it names a sample
%point. Context separates the two uses.) %% [R5-4]
Theorem~\ref{thm:optimality} applied within the marginal family gives
\begin{equation}
\Lambda(\tau_{t})=I_{t},\label{eq:score-map}
\end{equation}
the Fisher information of the marginal model of $t$, so that
$\mathrm{Eff}(\tau_{t})=I_{t}/I$ is the fraction of the
Fisher information retained by the statistic. This places the
criterion in the information-loss program of \citet{Fisher1922-ie},
computed for maximum likelihood in curved exponential families by
\citet{Efron1975-ic}; the equality case---$I_{t}=I$ exactly when $t$
is sufficient---is developed with Bahadur's Lecture 17 in
Section~\ref{sec:attainment}. Two properties distinguish the two
constructions. The score generalized estimator depends on $t$ only
through the $\sigma$-field
it generates: $\tau_{\psi(t)}=\tau_{t}$ for injective $\psi$,
so relabeling the estimator does not change the assessment. The
expectation version lacks this invariance---for nonlinear $\psi$,
$\tau_{E(\psi(t))}$ and $\tau_{E(t)}$ are different generalized
estimators---but remains operational when only moment functions of
$t$ are available. Among generalized estimators that are, at each
$\theta$, functions of $t$, the marginal score maximizes the
correlation with $\gamma^{(1)}$ \citep{Vos2025-mse}: $\tau_{t}$ is
the best that can be built from $t$ alone, and
$\mathrm{Eff}(\tau_{E(t)})\le\mathrm{Eff}(\tau_{t})$.

\subsection{The running example: Bernoulli sampling\label{subsec:example2}}

Bahadur's Example 2 threads through Lectures 14, 17, and 18, and it
will thread through the remainder of this paper. 
Let $X_{1},X_{2},\ldots$ be iid Bernoulli$(\theta)$ with
$\Theta=(0,1)$, observed under a stopping rule: (a) stop at a fixed
$n$; (b) stop at the $k$th success (negative binomial sampling);
(c) a two-stage scheme, which we list for fidelity to the source but
do not discuss further. 
In all cases the likelihood is
\[
\ell_{\theta}(s)=\theta^{T(s)}(1-\theta)^{N(s)-T(s)},
\]
with $N$ the number of trials and $T$ the number of successes, so the
score is one formula across all three plans:
\begin{equation}
\gamma_{\theta}^{(1)}(s)
=\frac{T(s)}{\theta}-\frac{V(s)}{1-\theta},
\qquad V(s)=N(s)-T(s).\label{eq:bernoulli-score}
\end{equation}
From $E_{\theta}(\gamma_{\theta}^{(1)})=0$ and item 13,
$E_{\theta}(T)/\theta=E_{\theta}(V)/(1-\theta)$ and
$I=E_{\theta}(T)/\theta^{2}+E_{\theta}(V)/(1-\theta)^{2}$. Under
plan (a), $I=n/\bigl(\theta(1-\theta)\bigr)$; under plan (b), $T=k$
and $E_{\theta}(N)=k/\theta$, whence
$I=k/\bigl(\theta^{2}(1-\theta)\bigr)$.

The example makes the family-awareness of the criterion concrete.
Under both plans the score is given by the same formula
\eqref{eq:bernoulli-score}, but the estimators are not the same
object. A generalized estimator is defined jointly on a sample space
and a family of distributions, and plans (a) and (b) generate two
different families---on two different sample spaces---that merely
share the labeling set $\Theta$. What is shared in
\eqref{eq:bernoulli-score} is notation.  
The quantities that assess
the estimator---$I$, $\mathrm{Var}(\tau)$, and the standardization
$\bar{\tau}$---are expectations, and expectations are taken in the
family the sampling plan generates: under plan (a) the score utilizes
information $n/\bigl(\theta(1-\theta)\bigr)$, under plan (b)
$k/\bigl(\theta^{2}(1-\theta)\bigr)$. The assessment of an estimator
is a property of the estimator \emph{and the family}; $\Lambda$ makes
the dependence explicit. 

The example also delivers, in its simplest form, the existence
payoff claimed in Section~\ref{subsec:modification}. Under plan (a)
with $T=0$, the score estimate is
$\tau_{s}(\theta)=-n/(1-\theta)$, smooth on all of $(0,1)$, and the
standardized estimate
$\bar{\tau}_{s}(\theta)=-\sqrt{n\theta/(1-\theta)}$ yields
inferential statements directly: the set
$\{\theta:|\bar{\tau}_{s}(\theta)|\le z\}=
\bigl(0,\,z^{2}/(n+z^{2})\bigr]$ is a one-sided interval for
$\theta$ obtained at a sample where the maximum likelihood estimate
does not exist in $\Theta$ \citep[cf.][Example 1]{VosWu2025}. The
odds ratio, where the contrast with unbiased estimation is
starkest, waits for Lecture 18 material in
Section~\ref{sec:attainment}.

\subsection{Lecture 15: attainment and the nonsmooth
case\label{subsec:L15}}

Two results of Lecture 15, identified as items 13(a) and 14, close
the chapter and set up the next. 

\begin{bitem}{Item 13(a)}
If $t\in U_{g}$ attains the bound \eqref{eq:CR} for
\emph{every} $\theta\in\Theta$, then the family is a one-parameter
exponential family,
$\ell_{\theta}(s)=\varphi(s)e^{A(\theta)+B(\theta)t(s)}$, with
$g=-A'/B'$.
\end{bitem}

The same argument as in the proof of \eqref{eq:CR} shows
$t\in W_{\theta}^{(1)}$ for all $\theta$, and the differential
equation this forces on $L_{\theta}$ integrates to the exponential
form. Uniform attainment is thus not a property a point estimator earns
by cleverness; it is a property a \emph{family} either has or lacks.
Section~\ref{sec:attainment} develops the converse direction and its
consequence, sufficiency.

Second, for families too rough for the score to exist---Bahadur's
instance is the double exponential location family---item 14
replaces derivatives with difference
quotients of likelihood ratios:

\begin{bitem}{Item 14 (Chapman--Robbins)}
For $t\in U_{g}$,
\[
\mathrm{Var}_{\theta}(t)\ge
\varlimsup_{\delta\rightarrow\theta}
\Bigl(\frac{g(\delta)-g(\theta)}{\delta-\theta}\Bigr)^{2}\Big/
E_{\theta}\Bigl(\frac{\Omega_{\delta,\theta}-1}{\delta-\theta}
\Bigr)^{2}.
\]
\end{bitem}

The bound recovers \eqref{eq:CR} when the smoothness returns. Two remarks
keep the bookkeeping honest. Chapman--Robbins is as pointwise as its
smooth counterpart, so Lemma~\ref{lem:pointwise} applies:
nonsmoothness is no escape from the impossibility. And the
generalized theory as developed here shares the smoothness
requirement---condition (ii) and the derivative in
\eqref{eq:Lambda}---so its scope matches that of Bahadur's smooth
chapters; what replaces $\Lambda$ off the smooth case is not pursued
in this paper.

\section{Attainment and the exponential family\label{sec:attainment}}

\subsection{Lecture 16: exact statements\label{subsec:L16}}

The deferred conditions are settled in Lecture 16, and we record them
in the informal register in which we will use them.
\begin{description}
\item[Condition 1:] $\ell_{\theta}(s)>0$ and
$\delta\mapsto\ell_{\delta}(s)$ is continuously differentiable for
each $s$.
\item[Condition 2:] Near each $\theta$ the scores are
dominated: $m_{\theta}(s)
=\sup_{|\delta-\theta|\le\varepsilon}|\gamma_{\delta}^{(1)}(s)|$
satisfies $E_{\theta}(m_{\theta}^{2})<\infty$ for some
$\varepsilon=\varepsilon(\theta)>0$.
\item[Condition 3:] $I(\theta)>0$.
\end{description}
Under these conditions: 

\begin{bitem}{Item 12E}
If $U_{g}$ is non-empty, then $g$ is differentiable and the
information inequality \eqref{eq:CR} holds on $\Theta$.
\end{bitem}

The proof shows that the difference
quotients $(\Omega_{\delta,\theta}-1)/(\delta-\theta)$ converge in
$V_{\theta}$ to $\gamma_{\theta}^{(1)}$, so that
$\gamma_{\theta}^{(1)}\in W_{\theta}$ and identity \eqref{eq:master} is
legitimate for the $f$ we use it on. The $k$-th order versions
(conditions $1_{k}$--$3_{k}$ and statement 11E) do the same for the
Bhattacharya bounds. These conditions are what ``suitably regular''
meant in Sections \ref{subsec:modification} and \ref{subsec:Lambda},
and they are assumed from here on.

Lecture 16 also introduces the contrast case that will matter in
Section~\ref{sec:heuristics}: for $X_{i}$ iid with density
$ae^{-b(x-\theta)^{4}}$, the joint density has the exponential form
in the \emph{three} statistics
$(\sum X_{i},\sum X_{i}^{2},\sum X_{i}^{3})$ with coefficients
functions of the single parameter $\theta$---a \emph{curved}
exponential family, in Efron's later terminology, and the setting in
which the information loss of maximum likelihood is non-zero
\citep{Efron1975-ic}.

\subsection{Lecture 17: the one-parameter exponential family
(item 15)\label{subsec:L17}} 

Bahadur's item 15 is the structural theorem of the lectures: it
identifies the family structure---one-parameter exponential---under
which the spaces $W_{\theta}$, the Bhattacharya bounds, and the
program of questions (i)--(iii) all become explicit and complete.

\begin{bitem}{Item 15}
Let
\[
\ell_{\theta}(s)=C(s)\,e^{A(\theta)+B(\theta)T(s)},
\]
with $C>0$, $T$ a fixed statistic, $B$ continuous and strictly
monotone, and  $\xi=B(\delta)-B(\theta)$ covering a
neighborhood of $0$. Then:
\begin{enumerate}
\item[(a)] $W_{\theta}^{(k)}=\mathrm{Span}\{1,T,\ldots,T^{k}\}$ for
$k=1,2,\ldots$;
\item[(b--c)] $W_{\theta}$ is the space of \emph{all}
square-integrable Borel functions of $T$;
\item[(d)] If $U_g$ is non-empty, the Bhattacharya bounds converge:
$b_{k}(\theta)\rightarrow\mathrm{Var}_{\theta}(\tilde{t})$;
\item[(e)] $\tilde{t}=E_{\theta}(t\mid T)$ for every $t\in U_{g}$
and every $\theta$;
\item[(f)] $T$ is \emph{sufficient}: for all $A\in\mathcal{A}$, $P_{\theta}(A\mid T)$ has a
version free of $\theta$.
\end{enumerate}
\end{bitem}

The proof of (e) is the promise of
Lecture 4 kept: once $W_{\theta}$ is exactly the square-integrable
functions of $T$, projection onto $W_{\theta}$ \emph{is} conditional
expectation given $T$, and the LMVUE machinery of
Section~\ref{sec:unbiased} becomes Blackwellization. Note the
direction of the derivation: sufficiency is not assumed but
\emph{derived}, an output of the Hilbert-space geometry.

\subsection{Sufficiency and the one-parameter exponential
family\label{subsec:L17gen}}

Item 15 can be read as a theorem about sufficiency. In the
one-parameter exponential family the space $W_{\theta}$ is the same at
every $\theta$---the square-integrable functions of $T$, parts (b)--(c)
of item 15---so projection onto it is conditional expectation given
$T$ (part (e)), and $T$ is sufficient (part (f)).  A single statistic
$T$ carries all the information in the sample, the locally best
unbiased estimate at every $\theta$ is a function of it, and the
$\theta$-dependence that Section~\ref{subsec:M3} called ``the problem
in practice'' disappears because $W_{\theta}$ has stopped moving.
Among families with common support this is essentially the only place
it can happen: the exponential family is the one family admitting a
sufficient statistic whose dimension does not grow with the sample
size (the Pitman--Koopman--Darmois theorem). The exponential family
enters the classical theory as the apparatus that secures
sufficiency, in the same role that admissibility, minimaxity, and
unbiasedness played in securing an optimality the pointwise criterion
could not deliver (Section~\ref{subsec:impossibility}).

Generalized estimation needs no such apparatus, because the sufficiency
content is already carried by the score. Theorem~\ref{thm:optimality},
applied inside the marginal family of a statistic $t$, gives
$\Lambda(\tau_{t})=I_{t}\le I$ (equation~\eqref{eq:score-map}), and the
equality case is exactly sufficiency: $I_{t}=I$ for every $\theta$ if
and only if $t$ is sufficient---equivalently, if and only if the score
generalized estimator of $t$ is the full-data score, $\tau_{t}=\gamma^{(1)}$,
so that $t$ loses none of the information the score utilizes.
Where the classical theory must find a family rigid enough to hold a
fixed sufficient statistic---$W_{\theta}$ frozen across $\theta$, one
$T$ good at every distribution at once---the generalized theory asks
only for a direction, the score $\gamma_{\theta}^{(1)}$, which is
defined separately at each $\theta$ and need not be the same function
of the data as $\theta$ varies. A fixed sufficient statistic is a
demanding, global requirement on the family, met by few; the score is
a local object that exists under smoothness alone (Lecture 16,
Section~\ref{subsec:L16}) and so is available in every family the
theory treats. 
Fisher's program---sufficient statistics as lossless reductions,
efficiency as the fraction of information retained
\citep{Fisher1922-ie}---is captured by the correlation between
$\tau_{t}$ and the score:
$\mathrm{Eff}(\tau_{t})=\Lambda(\tau_{t})/I=\mathrm{Corr}(\tau_{t},\gamma^{(1)})^{2}$.

What generalized estimation says about the exponential family is
nonetheless fully consistent with the classical account, and it
returns the attainment theorem almost for free. Here the score is
\begin{equation}
\gamma^{(1)}=A'(\theta)+B'(\theta)\,T
=B'(\theta)\bigl(T-\mu(\theta)\bigr),
\qquad\mu(\theta)=E_{\theta}(T)=-\frac{A'(\theta)}{B'(\theta)},
\label{eq:expfam-score}
\end{equation}
a smooth multiple of the centered sufficient statistic, so the
expectation generalized estimator $\tau_{E(T)}=T-\mu(\theta)$ has the
score's direction at every $\theta$: $\mathrm{Eff}(\tau_{E(T)})=1$
identically, and $T$ attains the Cram\'{e}r--Rao bound uniformly. Read
together with item 13(a) of Section~\ref{subsec:L15}, this is the
complete account of uniform attainment. Item 13(a) says a point
estimator can meet the bound at every $\theta$ only in a family of the
form \eqref{eq:expfam-score}; the generalized reading supplies the
reason---attainment at $\theta$ is alignment of $\tau_{E(t)}$ with the
score direction there (Theorem~\ref{thm:optimality}), and simultaneous
alignment at every $\theta$ is possible only when the family supplies a
single statistic whose centered version already points along the score
everywhere, which is precisely the exponential form. 

The exponential family is the flat case of this picture: the score is a
multiple of one sufficient statistic, its direction is fixed by that
statistic, and attainment is exact. The curved case---where no fixed
statistic carries the score and the maximum likelihood estimator falls
short of full efficiency---is Efron's subject, and the shortfall sits
inside Bahadur's geometry. Efron's statistical curvature $\kappa_{\theta}$
is a property of the family alone: the normalized length of the part of
the second log-likelihood derivative $L_{\theta}''$ that the score span
$W_{\theta}^{(1)}$ cannot see, the family's second-order motion
orthogonal to the score \citep{Efron1975-ic}. (Efron writes $\gamma$ for
this quantity; we write $\kappa$ to avoid collision with Bahadur's score
$\gamma^{(1)}$.) It is a purely geometric measure, and to attach it to
anything statistical takes asymptotics: only in the limit does $\kappa$
become the information the maximum likelihood estimator loses. Efron's
version of ``Fisher's fundamental theorem'' supplies that link---for iid
sampling with per-observation information $I_{1}$ and curvature
$\kappa_{1}$, the information deficit of the maximum likelihood estimator
converges to a constant,
$nI_{1}-I_{\hat{\theta}_{n}}\rightarrow\kappa_{1}^{2}I_{1}$, which the
classical account reads, through the reciprocal-variance meaning of
information, as the estimator discarding in the limit the information in
about $\kappa_{1}^{2}$ observations.

The efficiency of Definition~\ref{def:Eff} says as much at every finite
sample, and without the detour through asymptotics. Writing
$\tau_{\hat{\theta}_{n}}$ for the score generalized estimator of the
maximum likelihood estimator, Corollary~\ref{cor:eff-corr} gives
$\mathrm{Eff}(\tau_{\hat{\theta}_{n}})
=\mathrm{Corr}^{2}(\tau_{\hat{\theta}_{n}},\gamma^{(1)})
=I_{\hat{\theta}_{n}}/(nI_{1})$, so that
\[
1-\mathrm{Eff}(\tau_{\hat{\theta}_{n}})
=1-\mathrm{Corr}^{2}\bigl(\tau_{\hat{\theta}_{n}},\gamma^{(1)}\bigr)
\longrightarrow\frac{\kappa_{1}^{2}}{n},
\]
the angle between the full score and the marginal score of the maximum
likelihood estimator closing at rate $\kappa_{1}/\sqrt{n}$, its sine the
curvature $\kappa_{n}=\kappa_{1}/\sqrt{n}$ of the $n$-sample family. The
average slope of Section~\ref{subsec:Lambda} thus delivers at every
finite $n$, computed under the family, the statistical content that the
curvature reaches only in the limit and only through a variance---a
pointwise quantity standing in for a family-aware one.

\subsection{Lecture 18: UMVUEs and the odds
ratio\label{subsec:odds}}

Item 15 applies to both remaining plans of
Section~\ref{subsec:example2}. Under plan (b) (negative binomial) the
unbiased theory is narrow: the Cram\'{e}r--Rao bound is attained only
for estimands of the form $a+b/\theta$, and the UMVUE of $\theta$ is
$(k-1)/(N-1)$. The fuller story is under plan (a) (binomial sampling,
$N=n$ fixed), where item 15 applies with $T=\sum X_{i}$: every
square-integrable function of $\bar{X}$ is the UMVUE of its expected
value, and Blackwellization computes them. Bahadur's examples: for
$g(\theta)=\theta^{2}$, starting from $t=I\{X_{1}=X_{2}=1\}$,
\[
\tilde{t}=E_{\theta}(t\mid T)=\frac{T(T-1)}{n(n-1)},
\]
and from it the UMVUE of
$\sigma^{2}(\theta)=\mathrm{Var}_{\theta}(\bar{X})$. Unbiased
estimation here works exactly as intended, and (his Homework 4)
$U_{g}$ is non-empty precisely when $g$ is a polynomial of degree at
most $n$.

The odds ratio, $g(\theta)=\theta/(1-\theta)$, is not a
polynomial: \emph{no unbiased estimate exists}, and the classical
program ends at question (i) with the answer no. The maximum
likelihood plug-in $\hat{t}=\bar{X}/(1-\bar{X})$ is no help: at
$T=n$, a sample of probability $\theta^{n}>0$, it takes the value
$+\infty$, so $E_{\theta}(\hat{t})=+\infty$ for every $\theta$ and
$\hat{t}$ fails Definition~\ref{def:estimate}---it belongs to no
$V_{\theta}$ at all. Bahadur's remedy is
asymptotic---for large $n$ the offending term is small with high
probability.

The generalized account requires no remedy, because nothing has gone
wrong.  The standardized score is
parameterization-invariant (Section~\ref{subsec:modification}), so
inference about the odds $\xi=\theta/(1-\theta)$ \emph{is} inference about $\theta$ relabeled:
the question that has no unbiased answer and the question that has a
complete classical theory are the same question about the same family,
distinguished only by the labels on $M$. At the boundary sample
$T=0$---where the plug-in for $\theta$ exists but equals the boundary
value $0\notin\Theta$, and the plug-in for $\xi$ is degenerate---the
standardized score estimate of Section~\ref{subsec:example2},
rewritten in the odds, is
\begin{equation}
\bar{\tau}_{T=0}(\xi)=-\sqrt{n\theta/(1-\theta)}=-\sqrt{n\xi},
\label{eq:odds-boundary}
\end{equation}
a smooth, strictly decreasing function on $(0,\infty)$: the entire
inferential content of the boundary sample, for the odds ratio, is
the curve $-\sqrt{n\xi}$. The interval
$\{\xi:|\bar{\tau}_{T=0}(\xi)|\le z\}=(0,z^{2}/n]$ is immediate, and
intervals at interior samples come the same way
\citep[Sec.~4.2]{VosWu2025}. 

Where the classical theory must choose
between a constraint that cannot be met ($U_{g}$ empty) and an
estimator that fails its own criterion
($E_{\theta}(\hat{t})=+\infty$), the generalized theory does not
notice that anything is special about the odds ratio. The common
remedy of closing the parameter space---appending $\theta=0,1$ so
that boundary point estimates exist---appends distributions the model
never intended, breaks mutual absolute continuity, and still leaves
$g(\theta)=\theta/(1-\theta)$ undefined at one endpoint; it is a
repair demanded by the insistence that estimates be points of
$\Theta$, and it is not needed once they are not.

\section{The maximum likelihood heuristics\label{sec:heuristics}}

\subsection{Lecture 14: the heuristics\label{subsec:L14}}

Between the information inequality and its attainment theory, Bahadur
inserts a short lecture of a different character, four statements he
is careful to label heuristics: 
\begin{enumerate}
\item[(i)]
$W_{\theta}=\mathrm{Span}\{1,\gamma_{\theta}^{(1)},
\gamma_{\theta}^{(2)},\ldots\}$;
\item[(ii)] $W_{\theta}\approx\mathrm{Span}\{1,\gamma_{\theta}^{(1)}\}$
if $s$ is highly informative;
\item[(iii)] the maximum likelihood estimate satisfies
$\hat{\theta}(s)\stackrel{.}{\in}W_{\theta}$---``whatever $\theta$
may be!'';
\item[(iv)] hence, $\hat{\theta}$ is approximately the UMVUE of its
own expected value function.
\end{enumerate}
The supporting calculation expands the likelihood
equation about the truth: writing $L_{\theta}=\log\ell_{\theta}$ and
assuming $I$ large,
\begin{equation}
\hat{\theta}\;\approx\;\theta-\frac{L_{\theta}'}{L_{\theta}''}
\;\approx\;\theta+\frac{1}{\sqrt{I}}\,
\frac{\gamma_{\theta}^{(1)}}{\|\gamma_{\theta}^{(1)}\|},\label{eq:mle-heuristic}
\end{equation}
using $E_{\theta}(L_{\theta}')=0$,
$\mathrm{Var}_{\theta}(L_{\theta}')=I$, and
$-L_{\theta}''/I\approx1$ (item 13). The exclamation in (iii) is
Bahadur's own, and it marks the heuristics' surprising claim: a fixed
function of $s$ is claimed to lie, approximately, in a subspace that
moves with $\theta$.

\subsection{Lectures 25--27: using the score\label{subsec:L25}}

Chapter 7 puts the heuristics to work. If $u(s)$ is any estimate
accurate to order $1/\sqrt{I}$, the Newton--Raphson step for solving
$L'(\theta\mid s)=0$,
\[
u^{*}=u+\Bigl(-\frac{1}{L''(u\mid s)}\Bigr)L'(u\mid s)
\qquad\text{or}\qquad
u^{**}=u+\frac{1}{I(u)}\,L'(u\mid s),
\]
produces estimates with the first-order properties of
$\hat{\theta}$ itself: mean approximately $\theta$ and variance
approximately $1/I$. Bahadur's gallery of location families (normal,
double exponential, Cauchy, and the quartic
$ae^{-bx^{4}}$) shows the score reweighting the observations---equal
weights, central weights, downweighted tails, upweighted
tails---while the quartic example, continued from Lecture 16, shows
$\bar{X}$ failing to be even locally MVUE because the minimal
sufficient statistic  is not complete. Lecture
27 closes the chapter with a review: consistency (hard, and settled
only under compactness via the theorem Bahadur attributes to LeCam
and Wald), $E_{\theta}(\hat{\theta})\approx\theta$, and
$\mathrm{Var}_{\theta}(\hat{\theta})\approx1/I$, each an approximate
property of the point estimate justified through the exact behavior
of $L'$.

\subsection{The heuristics made exact\label{subsec:M6}}

Every statement in this chapter improves the point estimate by moving
it toward a root of the score, assesses it by its first-order
agreement with the score, and derives its properties from the exact
properties of the score. Read as statements about the point
estimator, the results are approximations; read as statements about
the score, they are exact and already proved. Heuristic (iii), with
its exclamation, dissolves: no fixed function of $s$ lies in every
$W_{\theta}$, but the generalized object $\gamma^{(1)}$ lies in
$W_{\theta}$ for every $\theta$ \emph{by construction}---the moving
subspace is only strange as a home for a non-moving object.
Heuristic (iv) is Theorem~\ref{thm:optimality}: what the point
estimate is approximately (a locally best estimate of its own mean),
the score is exactly (the optimal generalized estimator, uniformly).
The expansion \eqref{eq:mle-heuristic} says the maximum likelihood
estimator is, to first order, a relabeled standardized score; in a
full exponential family the relabeling is exact---the ML estimator of
the mean parameter is $T$ itself, which generates the same
$\sigma$-field as $\hat{\theta}$, so
$\tau_{\hat{\theta}}=\gamma^{(1)}$ in every parameterization---and
in curved families the gap is precisely the information loss
\citet{Efron1975-ic} computes. The Newton iterates are then best
understood as steps toward the score as the inferential object,
rather than toward the point estimate as classically conceived: the
sequence $u,u^{*},u^{**},\ldots$ improves an estimate exactly insofar
as it increases first-order agreement with $\gamma^{(1)}$.

The distinction between generalized estimation and the estimating-function
tradition should be drawn explicitly. \citet{Godambe1974-pt} and the
literature following them optimize over unbiased estimating 
functions---objects formally close to generalized
estimators---but the estimating function is scaffolding: its role is
to produce a root, and the root is the inference. Here the
generalized estimator \emph{is} the inference: $\tau_{s}$
discriminates among the distributions of the family, its intervals
$\{\theta:|\bar{\tau}_{s}(\theta)|\le z\}$ are read directly from it,
and it retains its meaning at samples (Section~\ref{subsec:odds})
where no usable root exists. The same reversal resolves the two-step
shape of classical practice: find a point estimate, then build
standard errors, tests, and intervals on top of it. The resource the
second step inherits is $I_{t}$, whatever the construction, so
assessing the first step by a pointwise criterion endorses estimators
that cannot support what is to be built on them. A generalized
estimator collapses the two steps---it is already the continuum of
test statistics the second step exists to provide---and consistency,
the one genuinely hard item in Bahadur's closing review, is in this
light a question about extracting a point from the generalized
object, not about the object itself.

\section{The two Hilbert spaces\label{sec:hilbert}}

Bahadur works with one Hilbert space per parameter value:
$V_{\theta}=L^{2}(S,\mathcal{A},P_{\theta})$, with inner product
$(\cdot,\cdot)_{\theta}$, and inside it the subspace $W_{\theta}$
spanned by the likelihood ratios---equal, in the smooth case, to the
span of $\{1,\gamma_{\theta}^{(1)},\gamma_{\theta}^{(2)},\ldots\}$, and in the
exponential family to the square-integrable functions of the
sufficient statistic. Every argument in the lectures is conducted at
a fixed $\theta$: projection onto $W_{\theta}$, orthonormal bases of
$W_{\theta}^{(k)}$, norms and correlations in
$(\cdot,\cdot)_{\theta}$. The family enters each $V_{\theta}$ through
identity \eqref{eq:family-inner}---$W_{\theta}$ is the family as seen
from $\theta$---but the spaces themselves are visited one at a time.

The generalized theory keeps the same functions and reverses the
organization: one space, many inner products. Let
\[
H=\bigcap_{\theta\in\Theta}V_{\theta},
\]
the set of functions square-integrable under every member of the
family. This is not a new construction: by
\eqref{eq:membership} it is exactly the set of Bahadur's estimates,
delivered by his own Definition~\ref{def:estimate}, and mutual
absolute continuity ensures that ``equivalent functions'' means the
same thing at every $\theta$, so the identification of elements
across the $V_{\theta}$ is honest.
On this single space the family of
inner products $\{(\cdot,\cdot)_{\theta}:\theta\in\Theta\}$ does the
work previously distributed over the separate spaces; it is the space
written $H_{M}$ in \citet{VosWu2025}, where the subscript records
that both the space and its inner products belong to the family. A
generalized estimator is then a $\theta$-indexed selection from $H$:
condition (i) of Definition~\ref{def:generalized} places
$\tau(\cdot,\theta)$, for each $\theta$, in the orthogonal complement
of the constants,
\[
H_{\theta}^{\perp}=\{h\in H:E_{\theta}(h)=0\}
=\{h\in H:(h,1)_{\theta}=0\},
\]
and condition (ii), differentiability in $\theta$, is what ties the
selections together---the point at which the relatedness of the
distributions, unused in the classical assessment
(Section~\ref{subsec:formulation}), enters the theory.

The complement of the constants deserves a comment, because it is the
one place the two organizations cut differently. Bahadur's
$W_{\theta}$ \emph{contains} the constants
($\Omega_{\theta,\theta}=1$), and his projections onto $W_{\theta}$
carry the mean along; the estimand appears inside the projection, as
in \eqref{eq:k1-projection}. The generalized theory works in
$H_{\theta}^{\perp}$, where the mean has been removed at every
$\theta$---this is precisely the operation
\eqref{eq:expectation-map} performs on a point estimator,
simultaneously across the family---and what remains of
$W_{\theta}^{(1)}$ after the constants are projected away is the span
of the score alone. The removal is not bookkeeping. Splitting off the
constants at every $\theta$ is the first instance of a general
operation, projecting away the directions associated with quantities
not of inferential interest, and it is the first step in the
treatment of nuisance parameters in the multiparameter theory
\citep{VosWu2025}; the scalar case shows the operation in its
simplest form, with only the mean to remove.

\section{Discussion\label{sec:discussion}}

The limits of the present treatment are deliberate. The parameter is
scalar; the multiparameter
theory, where the orthogonalization of Section~\ref{sec:hilbert}
grows into the treatment of nuisance parameters, is developed in
\citet{VosWu2025}. Bahadur himself points toward that development
from within the omitted chapters: Lecture 23, on metrics in the
parameter space, asks ``Should one use the Euclidean distance
$d_{1}$?'' and answers that ``what is really of interest is the
`distance' between $P_{\delta^{(1)}}$ and
$P_{\delta^{(2)}}$''---offering the Hellinger distance among the
alternatives. Distances between distributions rather than between
labels: the parameterization-invariance that runs through the
present paper is the same instinct, and the geometry of
\citet{VosWu2025} is its systematic development. Asymptotics
(Bahadur's Chapter 8) has been left
out entirely: the generalized statements here are exact at every
sample size, and the one item in Bahadur's closing review that
remains genuinely hard---consistency---concerns the extraction of a
point from the generalized object rather than the object itself.
Families too rough for condition (ii) are outside the theory as
presented, as they are outside Bahadur's smooth chapters; what
replaces $\Lambda$ there is open.

Within the class of generalized estimators the criterion is not
silent: the score is optimal, and the multiplicity in the equality
case of Theorem~\ref{thm:optimality}---any $A(\theta)\gamma^{(1)}$
attains the bound---is the geometry agreeing with the statistics. A
generalized estimate at $\theta_{0}$ is a test statistic, and test
statistics are assessed through tail areas, the significance level
at $\theta_{0}$ and the power elsewhere; increasing transformations
preserve rejection regions and so are statistically equivalent. The
rescalings that make up the score's direction class are exactly such
transformations, applied smoothly in $\theta$: the geometry declares
equivalent precisely what the statistics cannot distinguish.

Where the geometry \emph{is} silent lies one level up, and it is
what this paper leaves open. In the scalar case a generalized
estimator has a potential function $G$ with derivative $\tau$, and
inference can be conducted from $G$ directly \citep{VosWu2025}; the
archetype is the log likelihood, standardized so that its supremum
is zero. Values far from zero again mark models less consistent
with the sample, but the departure is one-sided and the mean of $G$
is not zero---and even recentered, distance from the mean of a
potential is not the statistically relevant quantity. Potential
functions can be compared through the geometry of their generalized
estimators, taking $\Lambda(G)$ to be $\Lambda$ of its derivative,
and on that assessment the log likelihood is optimal among
potential-based procedures, since its derivative is the score. But
the geometry does not distinguish the statistical behavior of a
potential-function procedure from that of its generalized
estimator: the likelihood-ratio and score procedures can differ at
a given sample while their $\Lambda$ assessments coincide. A
criterion that separates a potential from its derivative is the
natural next question.

Within these limits, nothing classical has been overturned. Every
theorem in the lectures
remains a theorem, and the deep apparatus---admissibility,
minimaxity, Bayes averaging, the unbiasedness program of questions
(i)--(iii)---remains what it was. What the impossibility lemma adds
is an explanation of \emph{why} that apparatus exists: each piece is
a forced response to assessing estimators by criteria that use the
family one distribution at a time. Bahadur's own asides mark every
joint at which the response strains, and it is a measure of the
lectures' honesty that the present paper could be organized around
them. The reading offered here is that the strain was never about
estimation; it was about the object asked to carry it.

What the modest modification gains was tracked lecture by lecture
through Bahadur's own development, and bears collecting: existence
at samples where point estimates fail, with the odds ratio at the
boundary sample reduced to the curve $-\sqrt{n\xi}$
(Section~\ref{subsec:odds}); a uniform optimality theorem with a
three-line proof (Theorem~\ref{thm:optimality}) in a subject where,
by Lemma~\ref{lem:pointwise}, no pointwise criterion admits a uniform
optimum at all; assessment that compares like with like, the
information utilized by an estimator against the information in the
family, with no unbiasedness constraint and no closure of the
parameter space; the family-dependence of inference made explicit
(Section~\ref{subsec:example2}); the locally best
unbiased estimates, whose $\theta$-dependence is the classical
theory's ``problem in practice,'' reassembled into a single fully
efficient object (Sections \ref{subsec:M3} and \ref{subsec:Lambda});
attainment and sufficiency recovered as the equality cases of one
bound under the two maps (Section~\ref{sec:attainment}); and the
maximum likelihood heuristics converted from approximate statements
about a point estimator into exact statements about the score
(Section~\ref{sec:heuristics}). All of it follows from one change:
an estimator is a function $\tau$ on $S\times\Theta$ with
$E\tau(s,\theta)=0$, rather than a function $t$ on $S$ alone.

The lectures close their central chapter by asking when the locally
best estimate can be freed of its dependence on $\theta$. The answer
offered here is that it never needed to be freed. What the lens of
point estimation views as the problem in practice is, in Fisher's
view of estimation as a continuum of significance tests, the
solution.

\bibliographystyle{plainnat}
\bibliography{vos}

\end{document}